\theoremstyle{definition}
\newtheorem*{ack*}{Acknowledgments}
\newcommand{\invert}{H}
\newcommand{\period}{\rlap{\ .}}
\newcommand{\comma}{\rlap{\ ,}}
\newcommand{\textbfit}[1]{\textbf{\textit{#1}}}
\newcommand{\upperth}{\textsuperscript{th}\,}
\newcommand{\fupperstar}{f^{\ast}}
\newcommand{\piupperstar}{\pi^{\ast}}
\newcommand{\Gammalowerstar}{\Gamma_{\ast}}
\newcommand{\Gammaupperstar}{\Gamma^{\ast}}
\newcommand{\Gammalowershriek}{\Gamma_{!}}
\newcommand{\sep}{\ensuremath{\textit{sep}}}
\newcommand{\hyp}{\ensuremath{\textit{hyp}}}
\newcommand{\spec}{\ensuremath{\textit{spec}}}
\newcommand{\bc}{\ensuremath{\textit{bc}}}
\newcommand{\of}{\circ}
\newcommand{\equivalent}{\simeq}
\newcommand{\paren}[1]{\left(#1\right)}
\newcommand{\Topcohbdd}{\Top_{\infty}^{\bc}}
\newcommand{\Topbc}{\Topcohbdd}
\newcommand{\StrTop}{\categ{StrTop}}
\newcommand{\StrTopspec}{\StrTop^{\spec}}
\renewcommand{\Space}{\categ{Spc}}
\newcommand{\Str}{\categ{Str}}
\DeclareMathOperator{\Gal}{Gal}
\DeclareMathOperator{\Pro}{Pro}
\title{Extended étale homotopy groups from profinite Galois categories}
\author{Peter J. Haine}
\date{\today}
\begin{document}

\maketitle

\begin{abstract} 
	In this note we show that the protruncated shape of a spectral $ \infty $-topos is a delocalization of its \textit{profinite} stratified shape.
	This gives a way to reconstruct the extended étale homotopy groups (i.e., the \textit{non}-profin\-ite\-ly complete étale homotopy groups) of a coherent scheme from its \textit{profinite} Galois category.
\end{abstract}

\setcounter{tocdepth}{2}
\tableofcontents


\section*{Introduction}\addcontentsline{toc}{section}{Introduction}

Let $ X $ be a coherent (i.e., quasicompact quasiseparated) scheme.
In recent work with Clark Barwick and Saul Glasman \cite{exodromy}, we constructed a \textit{delocalization} of the profinite completion of the Artin--Mazur--Friedlander étale homotopy type of $ X $ \cites{MR0245577}{MR676809}.
We call this delocalization the \textit{profinite Galois category} $ \Gal(X) $ of $ X $.
The profinite Galois category $ \Gal(X) $ is pro-object in finite categories, or, equivalently, a category object in profinite topological spaces \cites{Barwick:galperf}[p. 5 \& Construction 13.5]{exodromy}.
The underling category of $ \Gal(X) $ has objects geometric points of $ X $ and morphisms specalizations \textit{in the étale topology} (i.e., is the category of points of the étale topos of $ X $).
Concretely, given geometric points $ \fromto{x}{X} $ and $ \fromto{y}{X} $, a morphism $ \fromto{x}{y} $ in $ \Gal(X) $ is a lift $ \fromto{y}{X_{(x)}} $ of the geometric point $ \fromto{y}{X} $ to the strict localization $ X_{(x)} $ of $ X $ at $ x $.
The topology on $ \Gal(X) $ globalizes the profinite topology on the absolute Galois group $ \Gal(\kappa(x_0)^{\sep}/\kappa(x_0)) $ of the residue field $ \kappa(x_0) $ at each point $ x_0 \in X $.  

From the profinite category $ \Gal(X) $ we can extract a prospace $ \invert(\Gal(X)) $ by formally inverting all morphisms.
Our delocalization result \cite[Examples 11.6 \& 13.6]{exodromy} says that $ \invert(\Gal(X)) $ and the étale homotopy type of $ X $ become (canonically) equivalent after profinite completion.
In this note we provide a stronger relationship between the prospace $ \invert(\Gal(X)) $ and the étale homotopy type: they agree up to \textit{protruncation}.
Morphisms in the $ \infty $-category $ \Pro(\Space) $ of prospaces that induce equivalences after protruncation are precisely those morphisms that become \textit{$ \natural $-isomorphisms} in the category $ \Pro(h\Space) $, in the terminology of Artin--Mazur \cite[Definition 4.2]{MR0245577}.

\begin{mainthm}\label{thm:mainAG}
	Let $ X $ be a coherent scheme and write $ \Pi_{\infty}^{\et}(X) \in \Pro(\Space) $ for the étale homotopy type of $ X $.
	Then there is a natural natural map of prospaces
	\begin{equation*}
		\theta_{X} \colon \fromto{\Pi_{\infty}^{\et}(X)}{\invert(\Gal(X))} \period
	\end{equation*}
	Moreover, $ \theta_{X} $ induces an equivalence on protruncations.
	As a consequence:
	\begin{itemize} 
		\item For each integer $ n \geq 1 $ and geometric point $ \fromto{x}{X} $, we have canonical isomorphisms of progroups
		\begin{equation*}
			\isomto{\pi_{n}^{\et}(X,x)}{\pi_{n}(\invert(\Gal(X)),x)} \comma
		\end{equation*}
		where $ \pi_{n}^{\et}(X,x) $ is the $ n $\upperth homotopy progroup of the étale homotopy type of $ X $.

		\item For any ring $ R $, there is an equivalence of $ \infty $-categories between local systems of $ R $-modules on $ X $ that are uniformly bounded both below and above and continuous functors $ \fromto{\Gal(X)}{D^b(R)} $ that carry every morphism to an equivalence.
	\end{itemize}
\end{mainthm}

The progroups $ \pi_{n}^{\et}(X,x) $ are what we call the \textit{extended étale homotopy groups} of $ X $.
Note that the progroup $ \pi_1^{\et}(X,x) $ is the \textit{groupe fondamentale élargi} of \cite[Exposé X, \S 6]{MR43:223b}; the usual étale fundamenal group of \cite[Exposé V, \S 7]{MR50:7129} is the profinite completion of $ \pi_1^{\et}(X,x) $.


While the protruncated étale homotopy type of a connected Noetherian geometrically unibranch scheme is already profinite \cites[Theorem 11.1]{MR0245577}[Theorem 7.3]{MR676809}[Theorem 3.6.5]{DAGXIII}, in general \Cref{thm:mainAG} provides more refined information about the étale homotopy type, as illustrated in the following example.

\begin{mainexm}
	Consider the nodal cubic curve
	\begin{equation*}
		C = \Spec(\CC[x,y]/(y^2-x^2(x+1))
	\end{equation*}
	over the complex numbers.
	The Riemann Existence Theorem \cites[Theorem 12.9]{MR0245577}[Proposition 4.12]{Carchedi:higheretale}[Theorem 8.6]{MR676809} implies that the \textit{profinite completion} of the étale homotopy type of $ C $ is equivalent to the profinite completion of the circle $ S^1 $.
	It is well-known that, in fact, the \textit{protruncation} of the étale homotopy type of $ C $ is $ S^1 $; \Cref{thm:mainAG} provides an easy `categorical' explanation of this fact.

	There is a continuous functor from $ \Gal(C) $ to the poset category $ \{0 < 1\} $ given by sending the node point to $ 0 $ and every other geometric point to $ 1 $.
	The local ring $ O_{C,(x,y)} $ at the node point has two prime ideals and the strict Henselization of $ O_{C,(x,y)} $ is isomorphic to the strict Henselization of 
	\begin{equation*}
		(\CC[u,v]/(uv))_{(u,v)} \period
	\end{equation*} 
	Using this one sees that there are two lifts of the generic geometric point of $ C $ to the strict localization of $ C $ at the node.
	Hence the continuous functor $ \fromto{\Gal(C)}{\{0 < 1\}} $ factors through the category $ D $ with two objects $ 0 $ and $ 1 $ and two distinct morphisms $ \parto{0}{1} $.
	Moreover, the functor $ \fromto{\Gal(C)}{D} $ induces an equivalence on underlying homotopy types: the prospace $ \invert(\Gal(C)) $ is equivalent to $ \invert(D) \equivalent S^1 $.
	\Cref{thm:mainAG} now shows that the protruncation of the étale homotopy type of the nodal cubic is $ S^1 $.
\end{mainexm}

We relate the étale homotopy type and profinite Galois category of a coherent scheme by situating the problem in a more general context. 
In \cite{exodromy} we provided an equivalence of $ \infty $-categories 
\begin{equation*}\label{eq:Hochsterequiv}
	\widetilde{(-)} \colon \equivto{\Pro(\Str_{\pi})}{\StrTopspec_{\infty}}
\end{equation*}
between the \textit{$ \infty $-category of profinite stratified spaces} (on the left) and the \textit{$ \infty $-category of spectral stratified $ \infty $-topoi} (on the right) \cite[Theorem 10.10]{exodromy}.
The primary example of a spectral stratified $ \infty $-topos is the étale $ \infty $-topos $ X_{\et} $ of a coherent scheme $ X $ with its natural stratification by the Zariski space of $ X $ \cite[Example 10.6]{exodromy}.
The corresponding profinite stratified space is the profinite Galois category $ \Gal(X) $ \cite[Construction 13.5]{exodromy}.

The equivalence $ \Pro(\Str_{\pi}) \equivalent \StrTopspec_{\infty} $ provides a way to reconstruct the prospace given by the shape of the étale $ \infty $-topos of a coherent scheme $ X $\footnote{This is, up to protruncation, the Artin--Mazur--Friedlander étale homotopy type of $ X $; see \cite[\S 5]{MR3763287}, which we recall in \Cref{exm:etalehomotopy,exm:twoetaletypes}.} from its profinite Galois category $ \Gal(X) $, via the composite
\begin{equation*}
	\begin{tikzcd}[sep=1.5em]
		\Pro(\Str_{\pi}) \arrow[r, "\sim"{yshift=-0.25em}] & \StrTopspec_{\infty} \arrow[r] & \Top_{\infty} \arrow[r, "\Pi_{\infty}"] & \Pro(\Space) \comma
	\end{tikzcd}
\end{equation*}
where the middle functor functor forgets the stratification, and $ \Pi_{\infty} $ is the shape (see \Cref{dfn:shape}).
There's another functor $ \invert \colon \fromto{\Pro(\Str_{\pi})}{\Pro(\Space)} $ that doesn't require the use of $ \infty $-topoi, namely, the extension to pro-objects of the composite 
\begin{equation*}
	\begin{tikzcd}[sep=1.5em]
		\Str_{\pi} \arrow[r] & \Cat_{\infty} \arrow[r, "\invert"] & \Space \comma
	\end{tikzcd}
\end{equation*}
where the first functor forgets the stratification and the second functor sends an $ \infty $-category $ C $ to the homotopy type $ H(C) $ obtained by inverting every morphism in $ C $.
It follows formally that these two functors agree on $ \Str_{\pi} $.
Moreover, as the extension to pro-objects of a functor $ \fromto{\Str_{\pi}}{\Space} $, the functor $ \invert \colon \fromto{\Pro(\Str_{\pi})}{\Pro(\Space)} $ preserves inverse limits. 
Thus we have a map
\begin{equation*}
	\theta_C \colon \fromto{\Pi_{\infty}(\widetilde{C})}{\invert(C)}
\end{equation*}
natural in $ C \in \Pro(\Str_{\pi}) $.
In this note we prove that this map is an equivalence after protruncation:

\begin{mainthm}[\Cref{thm:mainthm}]\label{thm:main}
	Let $ \Space_{<\infty} \subset \Space $ denote the $ \infty $-category of truncated spaces, and write $ \tau_{<\infty} \colon \fromto{\Pro(\Space)}{\Pro(\Space_{<\infty})} $ for the left adjoint to the inclusion.
	For any profinite stratified space $ C $, the natural map
	\begin{equation*}
		\tau_{<\infty} \theta_C \colon \fromto{\tau_{<\infty}\Pi_{\infty}(\widetilde{C})}{\tau_{<\infty}\invert(C)}
	\end{equation*}
	of protruncated spaces is an equivalence.
\end{mainthm}

\noindent In light of \cite[Construction 13.5]{exodromy}, \Cref{thm:mainAG} is immediate from \Cref{thm:main}. 

Since the functor $ \invert $ and the shape $ \Pi_{\infty} $ agree on $ \Str_{\pi} $ and both $ \invert $ and $ \tau_{<\infty} $ preserve inverse limits, by the universal property of the $ \infty $-category of pro-objects, \Cref{thm:main} follows once we know that the the protruncated shape $ \tau_{<\infty} \Pi_{\infty} $ preserves inverse limits.
The forgetful functor \smash{$ \fromto{\StrTopspec_{\infty}}{\Top_{\infty}} $} factors through the subcategory \smash{$ \Topbc \subset \Top_{\infty} $} of bounded coherent $ \infty $-topoi and coherent geometric morphisms.
\Cref{thm:main} thus reduces to the following fact.

\begin{mainthm}[\Cref{prop:protruncshapeinverselim}]\label{thm:mainreduction}
	The protruncated shape 
	\begin{equation*}
		\tau_{<\infty} \Pi_{\infty} \colon \fromto{\Topbc}{\Pro(\Space_{<\infty})}
	\end{equation*}
	preserves inverse limits.
\end{mainthm}


In \cref{sec:review} we review the necessary background on pro-objects and shape theory.
The familiar reader should skip straight to \cref{sec:work} where we prove \Cref{thm:main,thm:mainreduction}.

\begin{ack*}
	We thank Clark Barwick for his guidance and sharing his many insights about this material.
	We also gratefully acknowledge support from both the \textsc{mit} Dean of Science Fellowship and \textsc{nsf} Graduate Research Fellowship.
\end{ack*}

\section{Preliminaries on shapes \& protruncated spaces}\label{sec:review}

In this section we review $ \infty $-categories of pro-objects and shape theory for $ \infty $-topoi.
We then record some facts about protruncations that we'll need.


\subsection{Review of shape theory}

\begin{nul}
	We say that a small $ \infty $-category $ I $ is \textbfit{inverse} if the opposite $ \infty $-category $ I^{\op} $ is filtered.
	An \textbfit{inverse system} in an $ \infty $-category $ C $ is a functor $ \fromto{I}{C} $, where $ I $ is an inverse $ \infty $-category.
	An \textbfit{inverse limit} is a limit of an inverse system.

	Let $ C $ be an $ \infty $-category.
	We write $ \Pro(C) $ for the $ \infty $-category of \textbfit{pro-objects} in $ C $ obtained by freely adjoining inverse limits to $ C $, and $ \yo \colon \fromto{C}{\Pro(C)} $ for the Yoneda embedding.
	We say that a pro-object $ X \in \Pro(C) $ is \textbfit{constant} if $ X $ lies in the essential image of $ \yo \colon \fromto{C}{\Pro(C)} $.
	If $ X \colon \fromto{I}{C} $ is an inverse system, we write $ \{X_{i}\}_{i \in I} \coloneq \lim_{i \in I} j(X_i) $ for the pro-object it defines.

	If $ C $ is accessible and admits finite limits, then $ \Pro(C) $ is equivalent to the full subcategory of $ \Fun(C,\Space)^{\op} $ spanned by the left exact accessible functors \SAG{Proposition}{A.8.1.6}.
	Let $ f \colon \fromto{C}{D} $ be a left exact accessible functor between accessible $ \infty $-categories which admit small limits.
	Then the functor $ f \colon \fromto{\Pro(C)}{\Pro(D)} $ admits a left adjoint $ L \colon \fromto{\Pro(D)}{\Pro(C)} $ \SAG{Example}{A.8.1.8}.
	We refer to $ L \of \yo \colon \fromto{D}{\Pro(C)} $ as the \textbfit{pro-left adjoint} of $ f $.
\end{nul}

\begin{ntn}
	We write $ \Cat_{\infty} $ for the $ \infty $-category of $ \infty $-categories and $ \Space \subset \Cat_{\infty} $ for the full subcategory spanned by the $ \infty $-groupoids, i.e., the $ \infty $-category of spaces.

	We write $ \Top_{\infty} \subset \Cat_{\infty} $ for the $ \infty $-category of $ \infty $-topoi and geometric morphisms.
	For any $\infty$-topos $\XX$, we write $ \Gamma_{\XX,\ast} $ or $ \Gamma_{\ast} $ for the global sections geometric morphism, which is the essentially unique geometric morphism $ \fromto{\XX}{\Space} $.
\end{ntn}

\begin{dfn}\label{dfn:shape}
	The \textbfit{shape} $ \Pi_{\infty} \colon \fromto{\Top_{\infty}}{\Pro(\Space)} $ is the left adjoint to the extension to pro-objects of the fully faithful functor $ \incto{\Space}{\Top_{\infty}} $ given by $ \goesto{K}{\Fun(K,\Space)} $ \cite[\SAGsubsec{E.2.2}]{SAG}.
	The shape admits two other very useful descriptions:
	\begin{itemize}
		\item Let $ \XX $ be an $ \infty $-topos, and write $ \Gamma_{!} \colon \fromto{\XX}{\Pro(\Space)} $ for the pro-left adjoint of $ \Gammaupperstar \colon \fromto{\Space}{\XX} $.
		The shape of $ \XX $ is equivalent to the prospace $ \Gamma_{!}(1) $, where $ 1 \in \XX $ denotes the terminal object \cites[\HAappthm{Remark}{A.1.10}]{HA}[\S 2]{MR3763287}.
	
		\item As a left exact accessible functor $ \fromto{\Space}{\Space} $, the prospace $ \Pi_{\infty}(\XX) $ is the composite $ \Gammalowerstar \Gammaupperstar $ \cites[\HTTsec{7.1.6}]{HTT}[\S 2]{MR3763287}.
	\end{itemize}
\end{dfn}

\begin{ntn}
	We write $ \invert \colon \fromto{\Cat_{\infty}}{\Space} $ for the left adjoint to the inclusion.
	The $ \infty $-groupoid $ \invert(C) $ is given by the colimit $ \invert(C) \equivalent \colim_{C} 1_{\Space} $ of the constant diagram $ \fromto{C}{\Space} $ at the terminal object $ 1_{\Space} \in \Space $.
\end{ntn}

\begin{exm}\label{exm:shapeofpresheaf}
	If $ C $ is a small $ \infty $-category, then $ \Gammaupperstar \colon \fromto{\Space}{\Fun(C,\Space)} $ admits a genuine left adjoint $ \Gammalowershriek \colon \fromto{\Fun(C,\Space)}{\Space} $ given by taking the colimit of a diagram $ \fromto{C}{\Space} $.
	The shape of the $ \infty $-topos $ \Fun(C,\Space) $ is thus given by the colimit of the constant diagram at the terminal object of $ \Space $:
	\begin{equation*}
		\Pi_{\infty}(\Fun(C,\Space)) = \Gammalowershriek(1_{\Fun(C,\Space)}) = \textstyle\colim_{C} 1_{\Space} \equivalent \invert(C) \period
	\end{equation*}

	Moreover, the functor $ \invert \colon \fromto{\Cat_{\infty}}{\Space} $ is equivalent to the composite 
	\begin{equation*}
		\begin{tikzcd}[sep=1.5em]
			\Cat_{\infty} \arrow[rrr, "{\Fun(-,\Space)}"] & & & \Top_{\infty} \arrow[r, "\Pi_{\infty}"] & \Space \period
		\end{tikzcd}
	\end{equation*}
\end{exm}

\begin{exm}[{\cite[Corollary 5.6]{MR3763287}}]\label{exm:etalehomotopy}
	If $ X $ is a locally Noetherian scheme, then the Artin--Mazur--Friedlander étale homotopy type of $ X $ corepresents the shape of the \textit{hypercomplete}\footnote{See \cite[\HTTsubsec{6.5.2}]{HTT} for a treatment of hypercomplete $ \infty $-topoi.} étale $ \infty $-topos \smash{$ X_{\et}^{\hyp} $} of $ X $.
	
	The shape of the étale $ \infty $-topos $ X_{\et} $ of $ X $ agrees with the Artin--Mazur-Friedlander étale homotopy type up to \textit{protruncation} (\Cref{exm:twoetaletypes}), to which we now turn.
\end{exm}


\subsection{Protruncated objects}

In this subsection, we recall some facts about protruncated objects and record an interesting observation (\Cref{lem:protruncff}) that we couldn't locate in the literature.

\begin{ntn}
	Let $ C $ be a presentable $ \infty $-category.
	For each integer $ n \geq -2 $, write $ C_{\leq n} \subset C $ for the full subcategory spanned by the $ n $-truncated objects, and $ \tau_{\leq n} \colon \fromto{C}{C_{\leq n}} $ for the $ n $-truncation functor, which is left adjoint to the inclusion $ C_{\leq n} \subset C $ \HTT{Proposition}{5.5.6.18}.
	Write $ C_{<\infty} \subset C $ for the full subcategory spanned by those objects which are $ n $-truncated for some integer $ n \geq -2 $.

	The \textbfit{pro-$ n $-truncation} functor $ \tau_{\leq n} \colon \fromto{\Pro(C)}{\Pro(C_{\leq n})} $ is the extension of the $ n $-truncation functor $ \tau_{\leq n} \colon \fromto{C}{C_{\leq n}} $ to pro-objects.
\end{ntn}

\begin{nul}\label{nul:protrun}
	Let $ C $ be a presentable $ \infty $-category.
	Then the extension to pro-objects of the functor $ \fromto{C}{\Pro(C_{<\infty})} $ given by sending an object $ X \in C $ to the inverse system given by its Postnikov tower $ \{\tau_{\leq n}(X)\}_{n\geq-2} $ is left adjoint to the inclusion $ \incto{\Pro(C_{<\infty})}{\Pro(C)} $.
	We call this left adjoint $ \tau_{<\infty} \colon \fromto{\Pro(C)}{\Pro(C_{<\infty})}$ \textbfit{protruncation}.
	
	A morphism of pro-objects $ f \colon \fromto{X}{Y} $, regarded as left exact accessible functors $ \fromto{C}{\Space} $, is an equivalence after protuncation if and only if for every truncated object $ K \in C_{<\infty} $, the induced morphism $ f(K) \colon \fromto{X(K)}{Y(K)} $ is an equivalence.
\end{nul}


\begin{exm}\label{exm:twoetaletypes}
	Since truncated objects are hypercomplete, for any $ \infty $-topos $ \XX $, the inclusion $ \incto{\XX^{\hyp}}{\XX} $ of the $ \infty $-topos of hypercomplete objects of $ \XX $ induces an equivalence
	\begin{equation*}
		\equivto{\tau_{<\infty}\Pi_{\infty}(\XX^{\hyp})}{\tau_{<\infty}\Pi_{\infty}(\XX)}
	\end{equation*}
	on protruncated shapes.
	In light of \Cref{exm:etalehomotopy}, the shape of the étale $ \infty $-topos of a locally Noetherian scheme $ X $ agrees with the Artin--Mazur--Friedlander étale homotopy type of $ X $ after protruncation.

	For an arbitrary scheme $ X $, we simply refer to the shape $ \Pi_{\infty}(X_{\et}) $ of the étale $ \infty $-topos $ X_{\et} $ of $ X $ as the \textbfit{étale homotopy type} of $ X $.
\end{exm}

\begin{nul}
	Let $ C $ be a presentable $ \infty $-category.
	The essentially unique functor $ \fromto{\Pro(C)}{C} $ that perserves inverse limits and restricts to the identity $ \fromto{C}{C} $ is right adjoint to the Yoneda embedding $ \yo \colon \incto{C}{\Pro(C)} $ \SAG{Example}{A.8.1.7}.
	Hence we have adjunctions
	\begin{equation*}
		\begin{tikzcd}
			C \arrow[r, hooked, shift left, "\yo"] & \Pro(C) \arrow[l, shift left] \arrow[r, "\tau_{<\infty}", shift left] & \Pro(C_{<\infty}) \arrow[l, hooked', shift left] \period
		\end{tikzcd}
	\end{equation*}
	If Postnikov towers converge in $ C $, i.e., $ C $ is a Postnikov complete presentable $ \infty $-category \SAG{Definition}{A.7.2.1}, then the composite right adjoint is also fully faithful:
\end{nul}

\begin{lem}\label{lem:protruncff}
	Let $ C $ be a Postnikov complete presentable $ \infty $-category (e.g., a Postnikov complete $ \infty $-topos).
	Then the protruncation functor
	\begin{equation*}
		\tau_{<\infty} \colon \fromto{C}{\Pro(C_{<\infty})} 
	\end{equation*}
	is fully faithful.
	Moreover, the essential image of $ \tau_{<\infty} \colon \incto{C}{\Pro(C_{<\infty})} $ is the full subcategory spanned by those protruncated objects $ X $ such that for each integer $ n \geq -2 $, the pro-$ n $-truncation $ \tau_{\leq n}(X) \in \Pro(C_{\leq n}) $ is a constant pro-object.
\end{lem}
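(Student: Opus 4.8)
The plan is to treat full faithfulness and the identification of the essential image separately, with Postnikov completeness as the only nonformal input. For full faithfulness, I would use the composite right adjoint $ R \colon \fromto{\Pro(C_{<\infty})}{C} $ appearing in the adjunctions displayed just before the statement, namely the inclusion $ \incto{\Pro(C_{<\infty})}{\Pro(C)} $ followed by the functor $ \fromto{\Pro(C)}{C} $ right adjoint to $ \yo $ (\SAG{Example}{A.8.1.7}). It suffices to check that the unit $ \fromto{X}{R\tau_{<\infty}(X)} $ is an equivalence for $ X \in C $. Since $ \tau_{<\infty}(X) $ is the Postnikov tower $ \{\tau_{\leq n}(X)\}_{n \geq -2} $ and the functor $ \fromto{\Pro(C)}{C} $ preserves inverse limits and restricts to the identity on $ C $, one gets $ R\tau_{<\infty}(X) \equivalent \lim_{n} \tau_{\leq n}(X) $ in $ C $, the Postnikov limit of $ X $. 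Postnikov completeness (\SAG{Definition}{A.7.2.1}) identifies this limit with $ X $, so the unit is an equivalence and $ \tau_{<\infty} $ is fully faithful.

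For the essential image, I would first record the formal identity $ \tau_{<\infty}(X) \equivalent \lim_{n} \tau_{\leq n}(X) $ in $ \Pro(C_{<\infty}) $, valid for every $ X \in \Pro(C) $: writing $ X $ as an inverse system $ \{X_j\}_j $ in $ C $, both sides are the double inverse limit of $ \yo(\tau_{\leq n}(X_j)) $ over $ (n,j) $ taken in either order, and inverse limits commute. For protruncated $ X \in \Pro(C_{<\infty}) $ the left side is $ X $ itself, because protruncated objects are local for the reflective localization $ \tau_{<\infty} $; hence $ X \equivalent \lim_{n} \tau_{\leq n}(X) $. This identity drives both inclusions. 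If $ X $ lies in the essential image, say $ X \equivalent \tau_{<\infty}(Y) $ with $ Y \in C $, then since $ \tau_{<\infty}(Y) = \{\tau_{\leq m}(Y)\}_m $, applying $ \tau_{\leq n} $ and using $ \tau_{\leq n}\tau_{\leq m} \equivalent \tau_{\leq \min(n,m)} $ shows $ \tau_{\leq n}(X) $ is the constant pro-object at $ \tau_{\leq n}(Y) $; so every object of the essential image satisfies the stated condition.

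Conversely, suppose every $ \tau_{\leq n}(X) \equivalent \yo(Y_n) $ is constant, with $ Y_n \in C_{\leq n} $. The canonical maps $ \fromto{\tau_{\leq n}(X)}{\tau_{\leq n-1}(X)} $ exhibit $ \{Y_n\}_n $ as a Postnikov tower in $ C $, i.e.\ $ \tau_{\leq n-1}(Y_n) \equivalent Y_{n-1} $. Setting $ Y \coloneq \lim_{n} Y_n \in C $, Postnikov completeness yields $ \tau_{\leq n}(Y) \equivalent Y_n $, whence $ \tau_{<\infty}(Y) \equivalent \{Y_n\}_n \equivalent \lim_{n} \tau_{\leq n}(X) \equivalent X $ by the identity above. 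Thus $ X $ lies in the essential image, completing the characterization.

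I expect the main obstacle to be the bookkeeping in the formal identity $ \tau_{<\infty}(X) \equivalent \lim_{n} \tau_{\leq n}(X) $: one must remember that the pro-$ n $-truncation $ \tau_{\leq n}(X) $ of a pro-object is itself a pro-object in $ C_{\leq n} $, so that the interchange being invoked is a genuine commutation of two inverse limits over a product of index categories, and one must keep straight whether a given inverse limit is formed in $ C $ (as for the unit, where it is a Postnikov limit) or in $ \Pro(C_{<\infty}) $ (as for the essential image). Once these are pinned down, the remainder is pure adjunction formalism feeding on Postnikov completeness.
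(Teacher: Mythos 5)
Your proof is correct; the only thing to compare it against is an absence, because the paper gives no written proof of this lemma at all. The statement is presented as an immediate consequence of the adjunction display preceding it, the implicit point being exactly the one you make: the unit of the composite adjunction $\tau_{<\infty}\of\yo \dashv \lim\of\iota$ at $X \in C$ is the Postnikov comparison map $\fromto{X}{\lim_{n}\tau_{\leq n}(X)}$, which Postnikov completeness inverts. (Incidentally, the paper's phrase ``the composite right adjoint is also fully faithful'' is a slip --- the composite right adjoint $\lim\of\iota \colon \fromto{\Pro(C_{<\infty})}{C}$ is far from fully faithful, since many protruncated objects share the same limit; the lemma and your argument correctly concern the composite \emph{left} adjoint, detected by the unit.) Your treatment of the essential image supplies what the paper omits entirely, and it isolates the genuinely substantive point: the converse inclusion needs more than convergence of Postnikov towers (which only gives full faithfulness of $\fromto{C}{\lim_{n}C_{\leq n}}$); it needs the essential surjectivity half of Postnikov completeness, which is what produces $Y \in C$ with $\tau_{\leq n}(Y) \equivalent Y_{n}$ from the compatible tower $\{Y_{n}\}_{n}$, and you invoke this correctly when setting $Y = \lim_{n} Y_{n}$. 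The interchange identity $\tau_{<\infty}(X) \equivalent \lim_{n}\tau_{\leq n}(X)$ in $\Pro(C_{<\infty})$, proved by commuting the two inverse limits over $(n,j)$, is the right bookkeeping device and is indeed what makes both inclusions run.
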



\begin{nul}
	Composing the fully faithful functor $ \tau_{<\infty} \colon \incto{\Space}{\Pro(\Space_{<\infty})} $ with the inclusion $ \incto{\Pro(\Space_{<\infty})}{\Pro(\Space)} $ gives another embedding of spaces into prospaces: for a space $ K $, the natural morphism of prospaces $ \fromto{j(K)}{\tau_{<\infty}(K)} $ is an equivalence if and only if $ K $ is truncated.
	Unlike the Yoneda embedding, the functor $ \tau_{<\infty} \colon \incto{\Space}{\Pro(\Space)} $ is neither a left nor a right adjoint.
\end{nul}


\section{Limits \& the protruncated shape}\label{sec:work}

The shape does not preseve inverse limits, even of bounded coherent $ \infty $-topoi.
In this section we prove that, nevertheless, the \textit{protruncated} shape preserves inverse limits of bounded coherent $ \infty $-topoi.
Our main theorem (\Cref{thm:mainthm}) is an easy consequence.


\begin{ntn}
	Write $ \Topbc \subset \Top_{\infty} $ for the subcategory of \textit{bounded coherent} $ \infty $-topoi and \textit{coherent} geometric morphisms \cites[Definitions \SAGthmlink{A.2.0.12} \& \SAGthmlink{A.7.1.2}]{SAG}[Definition 5.28]{exodromy}.
\end{ntn}

\begin{prp}\label{prop:protruncshapeinverselim}
	The protruncated shape 
	\begin{equation*}
		\tau_{<\infty}\Pi_{\infty} \colon \fromto{\Topbc}{\Pro(\Space_{<\infty})}
	\end{equation*}
	preserves inverse limits.
\end{prp}

\begin{proof}
	Let $ \XX \colon \fromto{I}{\Topbc} $ be an inverse system of bounded coherent $ \infty $-topoi and coherent geometric morphisms.
	For each $ i \in I $, the forgetful functor $ \fromto{I_{/i}}{I} $ is limit-cofinal \cite[\HTTthm{Example}{5.4.5.9} \& \HTTthm{Lemma}{5.4.5.12}]{HTT}, so we may without loss of generality assume that $ I $ admits a terminal object $ 1 $.
	For each $ i \in I $, write 
	\begin{equation*}
		\pi_{i,\ast} \colon \fromto{\textstyle\lim_{j \in I} \XX_{j}}{\XX_i}
	\end{equation*}
	for the projection, $ \Gamma_{i,\ast} \coloneq \Gamma_{\XX_i,\ast} $, and $ f_{i,\ast} \colon \fromto{\XX_i}{\XX_1} $ for the geometric morphism induced by the essentially unique morphism $ \fromto{i}{1} $ in $ I $.
	Write $ \Gammalowerstar \colon \fromto{\lim_{j \in I} \XX_{j}}{\Space} $ for the global sections geometric morphism.

	We want to show that the natural morphism
	\begin{equation*}
		\fromto{\colim_{i \in I^{\op}} \Gamma_{i,\ast} \Gammaupperstar_i}{\Gammalowerstar \Gammaupperstar}
	\end{equation*}
	in $ \Fun(\Space,\Space) $ is an equivalence when restricted to truncated spaces \cref{nul:protrun}.
	By \cite[Lemma 8.11]{exodromy} the natural morphism 
	\begin{equation*}
		\fromto{\colim_{i \in I^{\op}} f_{i,\ast} \fupperstar_i}{\pi_{1,\ast} \piupperstar_1}
	\end{equation*}
	is an equivalence in $ \Fun(\XX_1,\XX_1) $.
	Since $ \XX_{1} $ is bounded coherent, the global sections functor $ \Gamma_{1,\ast} \colon \fromto{\XX_1}{\Space} $ preserves filtered colimits of uniformly truncated objects \cites[\SAGthm{Proposition}{A.2.3.1}]{SAG}[Corollary 5.55]{exodromy}. 
	Thus for any truncated space $ K $ we see that 
	\begin{align*}
		\colim_{i \in I^{\op}} \Gamma_{i,\ast} \Gammaupperstar_i(K) &\equivalent \colim_{i \in I^{\op}} \Gamma_{1,\ast} f_{i,\ast} \fupperstar_i \Gammaupperstar_{1}(K) \\ 
		&\equivalence \Gamma_{1,\ast}\paren{\colim_{i \in I^{\op}} f_{i,\ast} \fupperstar_i \Gammaupperstar_{1}(K)} \\
		&\equivalent \Gamma_{1,\ast} \of \paren{\colim_{i \in I^{\op}} f_{i,\ast} \fupperstar_i} \of \Gammaupperstar_{1}(K) \\ 
		&\equivalence \Gamma_{1,\ast} \of \pi_{1,\ast}\piupperstar_1 \of \Gammaupperstar_{1}(K) \\
		&\equivalent \Gammalowerstar \Gammaupperstar(K) \period \qedhere
	\end{align*}
\end{proof}


\subsection{Proof of the Main Theorem}

We now prove the main result of this note.
Recall that we write
\begin{equation*}
	\widetilde{(-)} \colon \equivto{\Pro(\Str_{\pi})}{\StrTopspec_{\infty}}
\end{equation*}
for the equivalence of $ \infty $-categories of \cite[Theorem 10.10]{exodromy}.

\begin{lem}\label{lem:easyequiv}
	The square
	\begin{equation*}
		\begin{tikzcd}
			\Str_{\pi} \arrow[r, hooked, "\widetilde{(-)}"] \arrow[d, "\invert"'] & \StrTopspec_{\infty} \arrow[d, "\Pi_{\infty}"] \\ 
			\Space \arrow[r, hooked, "\yo"'] & \Pro(\Space) 
		\end{tikzcd}
	\end{equation*}
	commutes.
\end{lem}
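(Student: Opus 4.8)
The plan is to reduce the commutativity of the square to \Cref{exm:shapeofpresheaf} by unwinding the definition of $ \widetilde{(-)} $ on finite stratified spaces. Recall that the right-hand vertical functor is really the composite $ \fromto{\StrTopspec_{\infty}}{\Top_{\infty}} \xrightarrow{\Pi_{\infty}} \Pro(\Space) $ that first forgets the stratification, and that the left-hand vertical functor $ \invert $ is by definition the composite $ \fromto{\Str_{\pi}}{\Cat_{\infty}} \xrightarrow{\invert} \Space $ that first forgets the stratification to an $ \infty $-category and then inverts all morphisms. Writing $ u \colon \fromto{\Str_{\pi}}{\Cat_{\infty}} $ and $ v \colon \fromto{\StrTopspec_{\infty}}{\Top_{\infty}} $ for these two forgetful functors, commutativity of the square amounts to a natural equivalence $ \Pi_{\infty} \of v \of \widetilde{(-)} \equivalent \yo \of \invert $. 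My strategy is to establish a natural equivalence $ v \of \widetilde{(-)} \equivalent \Fun(u(-), \Space) $ of functors $ \fromto{\Str_{\pi}}{\Top_{\infty}} $, and then feed this into \Cref{exm:shapeofpresheaf}.

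The key input is the identification, for a finite stratified space $ C \in \Str_{\pi} $, of the underlying $ \infty $-topos of $ \widetilde{C} $ with the presheaf $ \infty $-topos $ \Fun(u(C), \Space) $ on its underlying $ \infty $-category. This is built into the construction of the equivalence $ \widetilde{(-)} $ of \cite[Theorem 10.10]{exodromy}: on the subcategory $ \Str_{\pi} $ of $ \pi $-finite stratified spaces, constructible sheaves are just functors out of the exit-path $ \infty $-category, so $ \widetilde{C} $ is $ \Fun(u(C),\Space) $ equipped with its tautological stratification, and forgetting the stratification via $ v $ recovers $ \Fun(u(C),\Space) $. I would verify that this identification is natural in $ C $ and intertwines $ u $ and $ v $, which yields the desired natural equivalence $ v \of \widetilde{(-)} \equivalent \Fun(u(-), \Space) $.

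Granting this, the top-and-right composite becomes $ \Pi_{\infty} \of \Fun(u(-),\Space) $. By the second assertion of \Cref{exm:shapeofpresheaf}, the functor $ \Pi_{\infty} \of \Fun(-,\Space) \colon \fromto{\Cat_{\infty}}{\Pro(\Space)} $ is naturally equivalent to $ \yo \of \invert $; in particular its value on any $ \infty $-category is a constant prospace, since the relevant $ \Gamma_{!} $ is a genuine left adjoint computing $ \colim_{C} 1 \equivalent \invert(C) $. Precomposing with $ u $ gives $ \yo \of \invert \of u = \yo \of \invert $, which is exactly the left-and-bottom composite. I expect the only real obstacle to be the first step: pinning down the underlying $ \infty $-topos of $ \widetilde{C} $ as a presheaf topos, checking the variance so that one obtains $ \colim_{u(C)} 1 \equivalent \invert(u(C)) $ rather than $ \invert(u(C)^{\op}) $, and confirming naturality of the identification — all of which must be extracted from the construction in \cite{exodromy} rather than reproved here.
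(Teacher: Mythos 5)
Your proposal is correct and takes essentially the same route as the paper: the paper's proof likewise observes that, by the very construction of the equivalence of \cite[Theorem 10.10]{exodromy}, forgetting stratifications intertwines $ \widetilde{(-)} $ with $ \Fun(-,\Space) \colon \fromto{\Cat_{\infty}}{\Top_{\infty}} $, and then concludes by \Cref{exm:shapeofpresheaf}. The extra checks you flag (naturality, and the variance issue for $ \invert $) are subsumed in that citation, so there is no gap.
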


\begin{proof}
	By the definition of the equivalence $ \equivto{\Pro(\Str_{\pi})}{\StrTopspec_{\infty}} $ of \cite[Theorem 10.10]{exodromy},
	the following square commutes
	\begin{equation*}
		\begin{tikzcd}
			\Str_{\pi} \arrow[r, hooked, "\widetilde{(-)}"] \arrow[d] & \StrTopspec_{\infty} \arrow[d] \\ 
			\Cat_{\infty} \arrow[r, "{\Fun(-,\Space)}"'] & \Top_{\infty} \kern0.5em \comma
		\end{tikzcd}
	\end{equation*}
	where the vertical functors forget stratifications.
	Combining this with \Cref{exm:shapeofpresheaf} proves the claim.
\end{proof}

\begin{nul}
	Since the extension of $ \invert \colon \fromto{\Str_{\pi}}{\Space} $ to pro-objects preserves inverse limits, \Cref{lem:easyequiv} shows that we have a morphism of prospaces 
	\begin{equation*}
		\theta_C \colon \fromto{\Pi_{\infty}(\widetilde{C})}{\invert(C)}
	\end{equation*}
	natural in $ C \in \Pro(\Str_{\pi}) $.
\end{nul}

\begin{thm}\label{thm:mainthm}
	For any profinite stratified space $ C $, the natural map
	\begin{equation*}
		\tau_{<\infty} \theta_C \colon \fromto{\tau_{<\infty}\Pi_{\infty}(\widetilde{C})}{\tau_{<\infty}\invert(C)}
	\end{equation*}
	of protruncated spaces is an equivalence.
\end{thm}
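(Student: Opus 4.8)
The plan is to deduce the statement from \Cref{prop:protruncshapeinverselim} together with the universal property of $ \Pro(\Str_{\pi}) $. Set $ F_{1} \coloneq \tau_{<\infty}\Pi_{\infty} \of \widetilde{(-)} $ and $ F_{2} \coloneq \tau_{<\infty}\invert $, so that $ \tau_{<\infty}\theta $ is a natural transformation $ \fromto{F_{1}}{F_{2}} $ of functors $ \fromto{\Pro(\Str_{\pi})}{\Pro(\Space_{<\infty})} $. Since $ \Pro(\Str_{\pi}) $ is generated under inverse limits by the constant pro-objects $ \Str_{\pi} $, and a functor out of $ \Pro(\Str_{\pi}) $ preserving inverse limits is determined by its restriction to $ \Str_{\pi} $, it suffices to prove: (i) both $ F_{1} $ and $ F_{2} $ preserve inverse limits; and (ii) $ \tau_{<\infty}\theta_{C} $ is an equivalence for every $ C \in \Str_{\pi} $. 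Granting these, the functor $ \fromto{\Pro(\Str_{\pi})}{\Fun(\Delta^{1},\Pro(\Space_{<\infty}))} $ sending $ C $ to the arrow $ \tau_{<\infty}\theta_{C} $ preserves inverse limits (these are computed pointwise in the arrow category) and, by (ii), carries $ \Str_{\pi} $ into the full subcategory spanned by the equivalences; as that subcategory is closed under inverse limits, the universal property forces $ \tau_{<\infty}\theta_{C} $ to be an equivalence for all $ C $.

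Claim (ii) is the content of \Cref{lem:easyequiv}: combined with \Cref{exm:shapeofpresheaf} it shows that $ \Pi_{\infty} \of \widetilde{(-)} $ and $ \yo \of \invert $ agree on $ \Str_{\pi} $, i.e.\ that $ \theta_{C} $ is already an equivalence of prospaces when $ C $ is a (constant) stratified space; applying $ \tau_{<\infty} $ yields (ii). Half of (i) is equally formal: $ F_{2} = \tau_{<\infty} \of \invert $ preserves inverse limits because each of $ \invert \colon \fromto{\Pro(\Str_{\pi})}{\Pro(\Space)} $ and $ \tau_{<\infty} \colon \fromto{\Pro(\Space)}{\Pro(\Space_{<\infty})} $ is the extension to pro-objects of a functor defined on $ \Str_{\pi} $ (resp.\ $ \Space $), and such extensions preserve inverse limits (\Cref{nul:protrun}).

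The remaining input is that $ F_{1} $ preserves inverse limits. As $ \widetilde{(-)} $ is an equivalence it preserves all limits, so the point is that the protruncated shape $ \tau_{<\infty}\Pi_{\infty} $ preserves the inverse limits arising in $ \StrTopspec_{\infty} $. Here I would factor $ \tau_{<\infty}\Pi_{\infty} $ on $ \StrTopspec_{\infty} $ as the forgetful functor $ \fromto{\StrTopspec_{\infty}}{\Topbc} $ followed by the functor of \Cref{prop:protruncshapeinverselim}, which preserves inverse limits. The main obstacle is exactly the first factor: one must know that forgetting the stratification sends an inverse limit computed in $ \StrTopspec_{\infty} $ to the inverse limit of the underlying bounded coherent $ \infty $-topoi, i.e.\ that limits of spectral stratified $ \infty $-topoi are computed on underlying $ \infty $-topoi. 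This is the only step that uses the specific construction of the equivalence $ \Pro(\Str_{\pi}) \equivalent \StrTopspec_{\infty} $ of \cite{exodromy} rather than formal properties of pro-objects and shapes. With that in hand, (i) and (ii) combine as above to finish the proof.
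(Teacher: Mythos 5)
Your proposal is correct and follows essentially the same route as the paper's proof: reduce to showing that $\tau_{<\infty}\Pi_{\infty}\of\widetilde{(-)}$ and $\tau_{<\infty}\invert$ both preserve inverse limits (via \Cref{prop:protruncshapeinverselim} plus the forgetful functor $\fromto{\StrTopspec_{\infty}}{\Topbc}$ preserving inverse limits, and via extension-to-pro-objects, respectively), then invoke \Cref{lem:easyequiv} and the universal property of $\Pro(\Str_{\pi})$. You merely make explicit two points the paper leaves implicit --- the arrow-category formulation of the universal-property step, and the fact that forgetting the stratification commutes with inverse limits, which the paper asserts without proof --- so there is no substantive difference.
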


\begin{proof}
	Since the forgetful functor $ \fromto{\StrTopspec_{\infty}}{\Topbc} $ preserves inverse limits, \Cref{prop:protruncshapeinverselim} implies that the protruncated shape $ \tau_{<\infty} \Pi_{\infty} \colon \fromto{\StrTopspec_{\infty}}{\Pro(\Space_{<\infty})} $ preserves inverse limits.
	Both $ \tau_{<\infty} $ and $ \invert $ preserve inverse limits, hence their composite $ \tau_{<\infty} \invert \colon \fromto{\Pro(\Str_{\pi})}{\Pro(\Space_{<\infty})} $ preserves inverse limits.
	The claim now follows from the fact that $ \theta_C $ is an equivalence for $ C \in \Str_{\pi} $ (\Cref{lem:easyequiv}) and the universal property of the $ \infty $-category $ \Pro(\Str_{\pi}) $ of profinite stratified spaces.
\end{proof}

\begin{nul}
	Note that \Cref{thm:mainAG} from the introduction is immediate from \Cref{thm:mainthm}, \cite[Construction 13.5]{exodromy}, and the definition of the étale homotopy type in terms of shape theory (\Cref{exm:etalehomotopy,exm:twoetaletypes}).
\end{nul}



\DeclareFieldFormat{labelnumberwidth}{#1}
\printbibliography[keyword=alph]
\addcontentsline{toc}{section}{References} 
\DeclareFieldFormat{labelnumberwidth}{{#1\adddot\midsentence}}
\printbibliography[heading=none, notkeyword=alph]

\end{document}